\newif\ifxetexorluatex
\newtheorem{lemma}{Lemma}
\newtheorem{proposition}{Proposition}
\def\VR{\kern-\arraycolsep\strut\vrule &\kern-\arraycolsep}
\def\vr{\kern-\arraycolsep & \kern-\arraycolsep}
\newcommand{\R}{\mathbb{R}}
\newcommand{\rD}{\mathrm{D}}
\newcommand{\rN}{\mathrm{N}}
\newcommand{\rgrad}{\mathsf{rgrad}}
\newcommand{\rhess}{\mathsf{rhess}}
\newcommand{\hatfY}{\hat{f}_{\bY}}
\newcommand{\hatfYY}{\hat{f}_{\bY\bY}}
\newcommand{\rK}{\mathrm{K}}
\newcommand{\sym}[1]{\mathrm{sym}_{#1}}
\newcommand{\asym}[1]{\mathrm{skew}_{#1}}
\newcommand{\KK}{\mathbb{K}}
\newcommand{\C}{\mathbb{C}}
\newcommand{\cE}{\mathcal{E}}
\newcommand{\St}[3]{\mathrm{St}_{#1, #2, #3}}
\newcommand{\Sd}[2]{\mathrm{S}^{+}_{#1, #2}}
\newcommand{\UU}[2]{\mathrm{U}_{#1, #2}}
\newcommand{\cH}{\mathcal{H}}
\newcommand{\ft}{\mathfrak{t}}
\newcommand{\cM}{\mathcal{M}}
\newcommand{\lb}{\llbracket}
\newcommand{\rb}{\rrbracket}
\newcommand{\bY}{Y}
\newcommand{\sfg}{\mathsf{g}}
\DeclareMathOperator{\diag}{diag}
\DeclareMathOperator{\xtrace}{xtrace}
\DeclareMathOperator{\JJ}{J}
\title{Riemannian gradient and Levi-Civita connection for fixed-rank matrices}
\author{Du Nguyen}
\email{\href{mailto:nguyendu@post.harvard.edu}{nguyendu@post.harvard.edu}}
\begin{document}

\begin{abstract}We provide formulas for Riemannian gradient and Levi-Civita connection for a family of metrics on fixed-rank matrix manifolds, based on nonconstant metrics on Stiefel manifolds.
\end{abstract}
\keywords{Optimization, Riemannian geometry, Stiefel manifold, Positive-definite, Positive-semidefinite, Levi-Civita connection, Hessian.}

\subjclass{65K10, 58C05, 49Q12, 53C25, 57Z20, 57Z25}
\maketitle
\section{Introduction}
Let $\KK$ be a field, either real ($\R$) or complex ($\C$). We apply the approach in \cite{NguyenRiemann} to compute Riemannian gradient and Hessian for the manifold of matrices in $\KK^{m\times n}$ with fixed-rank $p$, denoted by $\KK^{m\times n}_p$. Here, $m, n, p$ are positive integers. Let $\ft$ be the real transpose $T$ if $\KK$ is real and the hermitian transpose $H$ if $\KK$ is complex. A matrix $F\in \KK^{m\times n}_p$ factors to $F=UPV^{\ft}$ for $(U, P, V) \in \St{\KK}{p}{m}\times\Sd{\KK}{p}\times\St{\KK}{p}{n}$, where $\St{\KK}{p}{m}$ and $\St{\KK}{p}{n}$ are Stiefel manifolds (defined by $U^{\ft}U = I_p= V^{\ft}V$), and $\Sd{\KK}{p}$ is the manifold of positive definite matrices, (defined by $P^{\ft} = P$ and $P$ has positive eigenvalues). Such a factorization (we will call it PD-Stiefel factorization) always exists, via the SVD decomposition (with $P$ diagonal, hence symmetric). It is not unique. We call a matrix $O$ $\ft$-orthogonal if $O^{\ft}O = OO^{\ft} = I$. Let $\UU{\KK}{p}$ be the group of $\ft$-orthogonal matrices. If $(U, P, V)$ is a PD-Stiefel factorization of $F$, then $(UO^{\ft}, OPO^{\ft}, OV)$ is another factorization, and we have an equivalence relation $(U, P, V)\sim (UO^{\ft}, OPO^{\ft}, OV)$ with $O\in \UU{\KK}{p}$. Thus, $\KK^{m\times n}_p$ could be considered as a quotient $\St{\KK}{p}{m}\times\Sd{\KK}{p}\times\St{\KK}{p}{n}/\UU{\KK}{p}$. This approach was proposed in \cite{Mishra2014}. Here, we allow non constant ambient metrics on the two Stiefel manifolds $\St{\KK}{p}{m}$ and $\St{\KK}{p}{n}$, and we implement the complex case $\KK=\C$, which to our best knowledge has not been implemented before. The derivation follows from the framework proposed in \cite{NguyenRiemann}.
\section{Main results}
Let $\cE = \KK^{m\times p}\oplus \KK^{p\times p}\oplus \KK^{n\times p}$, what we call an {\it ambient space}. The tangent space of $\cM = \St{\KK}{p}{n}\times\Sd{\KK}{p}\times\St{\KK}{p}{m}$ at $(U, P, V)$ could be identified with a subspace of $\cE$, via the usual identification of tangent space of the Stiefel manifolds $\St{\KK}{p}{m}$ and $\St{\KK}{p}{n}$ as subspaces of $\KK^{m\times p}$ and $\KK^{n\times p}$ ($U^{\ft}\eta_U+\eta_U^{\ft}U = 0, V^{\ft}\eta_V+\eta_V^{\ft}V = 0$), while the tangent space of $\Sd{\KK}{p}$ is identified with $\ft$-symmetric matrices in $\KK^{p\times p}$, thus $\eta_P^{\ft} = \eta_P$. Here, a tangent vector of $\cM$ is represented by three components $(\eta_U, \eta_P, \eta_V)$. The action of an element $O\in \UU{\KK}{p}$ on an element $(U, P, V)$ by $(UO^{\ft}, OPO^{\ft}, OV)$ is free (no fixed point) and proper (as $\UU{\KK}{p}$ is compact), allowing us to identify $\KK^{m\times n}_p$ with the quotient manifold $\cM/\UU{\KK}{p}$. The recipe in \cite{NguyenRiemann} suggests we equip $\cM$ with an ambient metric $\sfg$, a self-adjoint operator-valued function from $\cM$ to $\cE$. If $(U, P, V)\in \cM$ and $\lb U, P, V\rb\in \cM/\UU{\KK}{p}$ is the corresponding equivalent class, this allows us to identify the tangent space of $\cM/\UU{\KK}{p}$ at $\lb U, P, V\rb$ with the horizontal space $\cH_Y$, the subspace of the tangent space $T_{(U, P, V)}\cM$ normal to the orbits of the group $\UU{\KK}{p}$. The horizontal space $\cH_Y$ could be identified as the nullspace of an operator $\JJ$ or image of an operator $\rN$. The paper gives us a number of explicit formulas that we can evaluate for the relevant Riemannian geometric quantities. Let $\hat{f}$ be a function defined on a neighborhood of $\cM$ in $\cE$, with gradient and hessian $\hatfY$, $\hatfYY$ respectively, the following describes the projection to the horizontal space, the horizontal Riemannian gradient, the Christoffel metric term, the Christoffel (Gamma) function, the Levi-Civita connection and the horizontal Riemannian Hessian operators (all evaluated at a manifold point $Y=(U, P, V)$):
\begin{equation}\label{eq:proj}
  \begin{gathered}
    \Pi_{\sfg} = \rN(\rN^{\ft}\sfg\rN)^{-1}\rN \\
    \Pi_{\sfg} = I - \sfg^{-1}\JJ(\JJ\sfg^{-1}\JJ^{\ft})^{-1}\JJ\\
  \rgrad_f = \Pi_{\sfg}\sfg^{-1}\hatfY
\end{gathered}
\end{equation}
\begin{equation}\label{eq:rhess}
  \begin{gathered}
  \rK(\xi, \eta) = \frac{1}{2}((\rD_{\xi}\sfg)\eta + (\rD_{\eta})\sfg\xi-\xtrace(\langle(\rD_\phi\sfg)\xi, \eta\rangle_{\cE}, \phi))\\
  \Gamma_{c}(\xi, \eta)=\Pi_{\sfg}\sfg^{-1}\rK(\xi, \eta)-(\rD_{\xi}\Pi_{\sfg})\eta\\
  \nabla_{\xi}\eta = \Pi_{\sfg}(\rD_{\xi}\imath \eta + \Gamma_c(\xi, \eta)\\ 
  \rhess^{11}_f\xi = \Pi_{\sfg}\sfg^{-1}(\hatfYY\xi + \sfg(\rD_{\xi}\Pi_{\sfg})(\sfg^{-1}\hatfY)
-(\rD_{\xi}\sfg)(\sfg^{-1}\hatfY)+\rK(\xi, \Pi_{\sfg}\sfg^{-1}\hatfY))
  \end{gathered}
\end{equation}
Here, $\imath\eta$ is the identification of the vector field $\eta$ to a $\cE$ valued-function, made explicit here to avoid confusion. $\xtrace$ is the index rising operator, which is simple for matrices:
\begin{equation}\begin{gathered}\xtrace(AbC, b) = A^{\ft}C^{\ft}\\
    \xtrace(Ab^{\ft}C, b) = CA
\end{gathered}    
\end{equation}
We will describe an one-to-one operator $\rN$ with range exactly the horizontal space and apply the above formulas for the gradient and Hessian (we will not describe the operator $\JJ$ in this paper although it can be constructed from the constraints described so far). This result extends \cite{Mishra2014}, which corresponds for the case all parameters $\alpha, \beta, \gamma$ are $1$. We recall the following from Proposition 7.2 in \cite{NguyenRiemann}

\begin{lemma}\label{lem:lyapunov}
Assume $\beta$ and $\delta$ are positive numbers. Consider the operator:
\begin{equation}\mathcal{L}(P)X = (\beta^{-1} -2\delta^{-1})X + \delta^{-1}(P^{-1}XP +PXP^{-1})
\end{equation}  
Let $P = C\Lambda C^{\ft}$ be a $\ft$-symmetric eigenvalue decomposition of $P$, where $\Lambda = \diag(\Lambda_1,\cdots,\Lambda_p)$ is the diagonal matrix of eigenvalues. Then
\begin{equation}\label{eq:ex_lyapunov}
  \begin{gathered}
    \mathcal{L}(P)^{-1}Z = C\{(C^{\ft}ZC) / M\} C^{\ft}
    \end{gathered}
\end{equation}
with $M \in \KK^{p\times p}$ is a matrix with entries $M_{ij} = \beta^{-1} - 2\delta^{-1} +\delta^{-1}(\Lambda_i^{-1}\Lambda_j + \Lambda_i\Lambda_j^{-1})$ and $/$ is a by-entry division. 
\end{lemma}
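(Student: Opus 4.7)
The plan is to diagonalize the operator $\mathcal{L}(P)$ via the change of variables induced by the spectral decomposition of $P$, reducing the matrix equation $\mathcal{L}(P)X = Z$ to an entrywise (pointwise) scaling whose inverse is entrywise division.

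First, write $P = C\Lambda C^{\ft}$ with $C \in \UU{\KK}{p}$ and $\Lambda = \diag(\Lambda_1,\ldots,\Lambda_p)$, so that $P^{-1} = C\Lambda^{-1}C^{\ft}$. Introduce the substitution $Y = C^{\ft}XC$ (so $X = CYC^{\ft}$) and $W = C^{\ft}ZC$. A direct substitution gives
\begin{equation*}
P^{-1}XP = C\Lambda^{-1}Y\Lambda C^{\ft}, \qquad PXP^{-1} = C\Lambda Y\Lambda^{-1} C^{\ft},
\end{equation*}
so that
\begin{equation*}
C^{\ft}\mathcal{L}(P)X\, C = (\beta^{-1}-2\delta^{-1})Y + \delta^{-1}\bigl(\Lambda^{-1}Y\Lambda + \Lambda Y\Lambda^{-1}\bigr).
\end{equation*}

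Next, read off the entries: since $\Lambda$ is diagonal, $(\Lambda^{-1}Y\Lambda)_{ij} = \Lambda_i^{-1}\Lambda_j Y_{ij}$ and $(\Lambda Y\Lambda^{-1})_{ij} = \Lambda_i\Lambda_j^{-1}Y_{ij}$. Therefore the $(i,j)$ component of $C^{\ft}\mathcal{L}(P)X\, C$ equals $M_{ij}Y_{ij}$, with $M_{ij}$ exactly as given in the statement. Hence the equation $\mathcal{L}(P)X = Z$ becomes the entrywise relation $M_{ij}Y_{ij} = W_{ij}$, whose unique solution is $Y_{ij} = W_{ij}/M_{ij}$, yielding the claimed formula $X = C\{(C^{\ft}ZC)/M\}C^{\ft}$.

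The only thing to verify for the inverse to exist is that no $M_{ij}$ vanishes. Since $\beta,\delta>0$ and $\Lambda_i,\Lambda_j>0$, the AM--GM inequality gives $\Lambda_i^{-1}\Lambda_j + \Lambda_i\Lambda_j^{-1}\ge 2$, so
\begin{equation*}
M_{ij} \ge \beta^{-1} - 2\delta^{-1} + 2\delta^{-1} = \beta^{-1} > 0.
\end{equation*}
This guarantees the entrywise division is well-defined and completes the proof. I do not anticipate a real obstacle here; the one spot requiring small care is the algebraic substitution showing that the bracketed expression becomes entrywise multiplication by $M$, which relies crucially on $\Lambda$ being diagonal and on $C$ being $\ft$-orthogonal so that $C^{\ft}C = CC^{\ft} = I_p$.
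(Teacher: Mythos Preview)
Your proof is correct and follows the same approach as the paper: the paper simply states that the result follows by a straightforward substitution and that $M_{ij}>0$ by the AM--GM inequality, and you have carried out exactly those two steps in full detail.
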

The proof is a straightforward substitution. $M_{ij} > 0$ by the AGM inequality.
\begin{proposition}
Let $[\omega_U, \omega_P, \omega_V] \in \cE = \KK^{m\times p}\oplus \KK^{p\times p}\oplus \KK^{n\times p}$, the following operator on $\cE$, which is positive-definite, gives $\cE$ an inner product that induces a Riemannian metric on $\cM$:
\begin{equation}\label{eq:metric}
  \sfg[\omega_U, \omega_P, \omega_V] = [\alpha_0\omega_U + (\alpha_1-\alpha_0)UU^{\ft}\omega_U, \beta P^{-1}\omega_P P^{-1}, \gamma_0\omega_V + (\gamma_1-\gamma_0)VV^{\ft}\omega_V]
\end{equation}  
where $\alpha_0, \alpha_1, \gamma_0,\gamma_1, \beta$ are positive numbers. Set $\delta = \alpha_1+\gamma_1$.
The projection in \cref{eq:proj} is given by:
\begin{equation}\label{eq:projfr}
  \begin{gathered}
    \Pi_{\sfg}(UPV^{\ft})[\omega_U, \omega_P, \omega_V] =
       [U\{-\gamma_1D^-+\delta^{-1}(P^{-1}D^+ -D^+P^{-1})\} + \omega_U-UU^{\ft}\omega_U,\\
         \beta^{-1} D^+,V\{\alpha_1D^-+ \delta^{-1}(P^{-1}D^+ -D^+P^{-1}) \} +\omega_V-VV^{\ft}\omega_V]\\
\text{ with }       D^- = \delta^{-1}\asym{\ft}(V^{\ft}\omega_V - U^{\ft}\omega_U)\\
D^+ =\mathcal{L}(P)^{-1}\sym{\ft}(\beta^{-1}\omega_P+\alpha_1\delta^{-1}(U^{\ft}\omega_UP -PU^{\ft}\omega_U) + \gamma_1\delta^{-1}(V^{\ft}\omega_VP - PV^{\ft}\omega_V))
\end{gathered}
\end{equation}
To compute the second order terms in \cref{eq:rhess}, the Christoffel metric term is
\begin{equation}\begin{gathered}
  \rK(\eta, \xi) = [
    (\alpha_1-\alpha_0)(U\sym{\ft}(\eta_U^{\ft}\xi_U)-(\eta_U\xi_U^{\ft} + \xi_U\eta_U^{\ft})U),\\
    -\beta\sym{\ft}(P^{-1}\eta_PP^{-1}\xi_PP^{-1}),\\
      (\gamma_1-\gamma_0)(V\sym{\ft}(\eta_V^{\ft}\xi_V)-(\eta_V\xi_V^{\ft} +
                           \xi_V\eta_V^{\ft})V)]
  \end{gathered}
\end{equation}
The directional derivatives $(\rD_{\xi}\Pi)\omega$ is computed by differentiating \cref{eq:projfr}, using:
\begin{equation}
  \begin{gathered}
  \rD_{\xi}D^- = \delta^{-1}\asym{\ft}(\xi_V^{\ft}\omega_V - \xi_U^{\ft}\omega_U)\\
\rD_{\xi}D^+  =\mathcal{L}(P)^{-1}\{ \rD_{\xi}\sym{\ft}(\alpha_1\delta^{-1}(U^{\ft}\omega_UP -PU^{\ft}\omega_U) + \gamma_1\delta^{-1}(V^{\ft}\omega_VP - PV^{\ft}\omega_V))\\
 - \delta^{-1}(\xi_P  D^+ P^{-1} + P^{-1} D^+ \xi_P -
                      P  D^+  P^{-1} \xi_P  P^{-1} -
                      P^{-1}  \xi_P  P^{-1}  D^+  P)\}
\end{gathered}\end{equation}

\end{proposition}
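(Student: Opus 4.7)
The plan is to apply the abstract formulas \cref{eq:proj} and \cref{eq:rhess} to the metric $\sfg$ of \cref{eq:metric}, reducing the main computation to \cref{lem:lyapunov} as a black box. Positive-definiteness of $\sfg$ is immediate blockwise: the $U$-block $\alpha_0 I + (\alpha_1 - \alpha_0)UU^{\ft}$ acts as $\alpha_1$ on the range of $UU^{\ft}$ and as $\alpha_0$ on its orthogonal complement, both positive; the $P$-block $\omega_P \mapsto \beta P^{-1}\omega_P P^{-1}$ is conjugate to $\beta I$ via $\omega_P \mapsto P^{-1/2}\omega_P P^{-1/2}$; the $V$-block is analogous to the $U$-block.

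For the projection \cref{eq:projfr}, I will write $\omega = \eta + v + \nu$ with $\eta$ horizontal, $v = (-UA, AP - PA, -VA)$ vertical coming from an antisymmetric infinitesimal generator $A$ of the $\UU{\KK}{p}$-action, and $\nu$ a general $\sfg$-normal direction to $\cM$. Demanding that $\eta$ be tangent absorbs the Stiefel-symmetric parts $\sym{\ft}(U^{\ft}\omega_U), \sym{\ft}(V^{\ft}\omega_V)$ and the antisymmetric part $\asym{\ft}(\omega_P)$ into $\nu$, and expresses $\eta$ in terms of the remaining antisymmetric parameter $A$. Horizontality---which reads $\alpha_1 U^{\ft}\eta_U + \gamma_1 V^{\ft}\eta_V = \beta(P^{-1}\eta_P - \eta_P P^{-1})$ after pairing $\sfg\eta$ against vertical directions---then reduces to a single linear equation for $A$ whose operator is a scalar multiple of the $\mathcal{L}(P)$ from \cref{lem:lyapunov}. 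Inverting via the lemma and repackaging the antisymmetric and symmetric pieces of the solution as $D^-$ and $D^+$ produces the stated formula.

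For the second-order data, $\rD_\xi\sfg$ is computed blockwise: the $U$-block is $(\alpha_1 - \alpha_0)(\xi_U U^{\ft} + U\xi_U^{\ft})\omega_U$, the $P$-block is $-\beta(P^{-1}\xi_P P^{-1}\omega_P P^{-1} + P^{-1}\omega_P P^{-1}\xi_P P^{-1})$, and the $V$-block is analogous. Substituting into $\rK(\xi,\eta) = \tfrac{1}{2}((\rD_\xi\sfg)\eta + (\rD_\eta\sfg)\xi - \xtrace(\langle (\rD_\phi\sfg)\xi, \eta\rangle_\cE, \phi))$ and applying $\xtrace(AbC,b) = A^{\ft}C^{\ft}$, $\xtrace(Ab^{\ft}C, b) = CA$ to each monomial, the third term serves as the exact counterweight that collapses each block to a single $\sym{\ft}$-type expression. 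On the $U$- and $V$-blocks this additionally invokes the tangency identity $U^{\ft}\xi_U + \xi_U^{\ft}U = 0$ to combine $\xi_U U^{\ft}\eta_U + \eta_U U^{\ft}\xi_U$ into $-(\eta_U\xi_U^{\ft}+\xi_U\eta_U^{\ft})U$. The derivatives of $D^\pm$ follow by direct chain rule: $D^-$ is $\omega$-linear with coefficients in $U, V$, yielding the displayed formula immediately; for $D^+$ we use $\rD_\xi\mathcal{L}(P)^{-1} = -\mathcal{L}(P)^{-1}(\rD_\xi\mathcal{L}(P))\mathcal{L}(P)^{-1}$ and observe that only $\delta^{-1}(P^{-1}XP + PXP^{-1})$ inside $\mathcal{L}(P)$ depends on $P$; its derivative in $\xi_P$, once $D^+ = \mathcal{L}(P)^{-1}(\cdots)$ is substituted back in, produces precisely the four-term combination in the statement.

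The main obstacle is the Christoffel step: the triple $P^{-1} \cdots P^{-1}$ structure of the $P$-block of $\rD_\xi\sfg$ generates several monomials under $\xtrace$, and verifying that the subtracted term in \cref{eq:rhess} collapses each block to a single $\sym{\ft}$-type expression requires disciplined use of the tangency conditions together with cyclic-trace rearrangements.
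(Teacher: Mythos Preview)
Your approach to the projection is valid but differs from the paper's. The paper does not decompose $\omega=\eta+v+\nu$ and solve for the vertical generator; instead it builds an explicit linear parametrization $\rN:\cE_{\rN}=\KK^{(m-p)\times p}\oplus\KK^{p\times p}\oplus\KK^{(n-p)\times p}\to\cE$ of the horizontal space, using orthogonal complements $U_0,V_0$ of $U,V$ together with a full matrix $D\in\KK^{p\times p}$ whose symmetric and antisymmetric parts are exactly the $D^+,D^-$ of the statement, and then applies $\Pi_{\sfg}=\rN(\rN^{\ft}\sfg\rN)^{-1}\rN^{\ft}\sfg$. In that setup $\rN^{\ft}\sfg\rN$ is block-diagonal in $(B,D^+,D^-,C)$: the $D^-$ block is the scalar $\alpha_1\gamma_1\delta$, while the $D^+$ block is $P^{-1}\mathcal{L}(P)(\cdot)P^{-1}$, so only $D^+$ ever meets \cref{lem:lyapunov}. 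Your route instead produces a single $\mathcal{L}(P)$-equation for the \emph{antisymmetric} $A$ and recovers $\eta$ from it; this is correct and has the pleasant feature of never introducing $U_0,V_0$, but the ``repackaging'' you allude to is then a genuine extra step---after setting $D^+:=\beta\eta_P$ and $D^-:=\delta^{-1}(V^{\ft}\eta_V-U^{\ft}\eta_U)$ one still has to check that $\mathcal{L}(P)D^+$ matches the stated right-hand side, which boils down to the commutation identity $\mathcal{L}(P)(AP-PA)=[\mathcal{L}(P)A]\,P-P\,[\mathcal{L}(P)A]$ rather than a mere relabelling. For $\rK$ and for $\rD_\xi D^{\pm}$ the two write-ups coincide: the paper also calls the $\rK$ computation straightforward and obtains $\rD_\xi D^+$ by differentiating the defining $\mathcal{L}(P)$-equation.
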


\begin{proof}
With the vertical vectors $Uq, qP - Pq, Vq$, the horizontal condition with the metric in \cref{eq:metric} is $\alpha_1 U^{\ft}\omega_U +\gamma_1V^{\ft}\omega_V + \beta \omega_PP^{-1} -\beta P^{-1}\omega_P=0$, here $[\omega_U, \omega_P, \omega_V]$ is an element of the ambient space $\cE = \KK^{m\times p}\oplus \KK^{p\times p}\oplus \KK^{n\times p}$. We will use the symbols $A^+=\sym{\ft}A, A^-=\asym{\ft}A$ for a matrix $A$. Map $[B, D, C] \in \cE_{\rN} = \KK^{(m-p)\times p}\oplus \KK^{p\times p}\oplus\KK^{(n-p)\times p}$ to $\cE$ by defining $\rN[B, D, C]) = [(\rN[B, D, C])_U, (\rN[B, D, C])_P, (\rN[B, D, C])_V]$ with:
$$\begin{gathered}
  (\rN[B, D, C])_U = U\{-\gamma_1D^-+(\alpha_1+\gamma_1)^{-1}(P^{-1}D^+ -D^+P^{-1})\} + U_0B\\
  (\rN[B, D, C])_P =\beta^{-1} D^+\\
  (\rN[B, D, C])_V =  V\{\alpha_1D^-+ (\alpha_1+\gamma_1)^{-1}(P^{-1}D^+ -D^+P^{-1}) \} +V_0 C
\end{gathered}$$
It is a linear map, satisfying the Stiefel tangent condition on the $U$ and $V$ components, the $P$ component is symmetric, and the horizontal condition is satisfied. It is also clearly a one-to-one map, and a dimensional count shows it is onto the tangent space. With $\delta = \alpha_1+\gamma_1$, we can compute directly:
$$\begin{gathered}\rN^{\ft}[\omega_U, \omega_P, \omega_V] = [(\rN^{\ft}[\omega_U, \omega_P, \omega_V])_B, (\rN^{\ft}[\omega_U, \omega_P, \omega_V])_D, (\rN^{\ft}[\omega_U, \omega_P, \omega_V])_C]\\
(\rN^{\ft}[\omega_U, \omega_P, \omega_V])_B = U_0^{\ft}\omega_U\\
  (\rN^{\ft}[\omega_U, \omega_P, \omega_V])_P =  \sym{\ft}(\beta^{-1}\omega_P+\delta^{-1} P^{-1}U^{\ft}\omega_U -\\
  \delta^{-1} U^{\ft}\omega_U P^{-1} +\delta^{-1} P^{-1}V^{\ft}\omega_V -\delta^{-1} V^{\ft}\omega_V P^{-1}) -\\
\gamma_1\asym{\ft}(U^{\ft}\omega_U)+\alpha_1\asym{\ft}(V^{\ft}\omega_V)\\
(\rN^{\ft}[\omega_U, \omega_P, \omega_V])_C =V_0^{\ft}\omega_V)\end{gathered}$$
$$\begin{gathered}\rN^{\ft}\sfg[\omega_U, \omega_P, \omega_V] = [\alpha_0U_0^{\ft}\omega_U,\\
    \sym{\ft}( P^{-1}\omega_PP^{-1}+\alpha_1\delta^{-1} P^{-1}U^{\ft}\omega_U -\alpha_1\delta^{-1} U^{\ft}\omega_U P^{-1} +\\
\gamma_1\delta^{-1} P^{-1}V^{\ft}\omega_V -\gamma_1\delta^{-1} V^{\ft}\omega_V P^{-1})-\alpha_1\gamma_1\asym{\ft}(U^{\ft}\omega_U )+\alpha_1\gamma_1\asym{\ft}(V^{\ft}\omega_V),\\
\gamma_0V_0^{\ft}\omega_V]\end{gathered}$$
Hence:
$$\rN^{\ft}\sfg \rN[B, D, C] = [\alpha_0B, \rN^{\ft}\sfg \rN[B, D, C]_D , \gamma_0C]$$
With $\rN^{\ft}\sfg \rN[B, D, C]_D = \rN^{\ft}\sfg \rN[B, D, C]_D^+ + \rN^{\ft}\sfg \rN[B, D, C]_D^{-}$ where:
$$\rN^{\ft}\sfg \rN[B, D, C]_D^+ = (\frac{1}{\beta} -\frac{2}{\alpha_1 + \gamma_1})P^{-1}D^+P^{-1} +\frac{1}{\alpha_1 +\gamma_1}(P^{-2}D^+ +D^+P^{-2}) $$
$$\rN^{\ft}\sfg \rN(B, D, C)_D^- =(\alpha_1^2\gamma_1+\alpha_1\gamma_1^2)D^-$$
Therefore, to solve $\rN^{\ft}\sfg \rN(B, D, C)= [\hat{B}, \hat{D}, \hat{C}]$, it is clear $B = \alpha_0^{-1}\hat{B}$, $C = \gamma_0^{-1}\hat{C}$ and $D^- = (\alpha_1\gamma_1\delta)^{-1}\hat{D}$.  Note $\mathcal{L}(P)D^+= P\rN^{\ft}\sfg \rN[B, D, C]_D^+P$, with $\mathcal{L}(P)$ as in \cref{lem:lyapunov}.

By the formula $\Pi_{\sfg} = \rN(\rN^{\ft}\sfg\rN)^{-1}\rN^{\ft}\sfg$, the projection of $[\omega_U, \omega_P, \omega_V]\in \cE$ could be evaluated by $\rN[B, D, C]$, where $[B, D, C]$ are solutions of $\rN^{\ft}\sfg\rN[B, C, D]  = \rN^{\ft}\sfg[\omega_U, \omega_P, \omega_V]$, therefore $D^+ = \mathcal{L}(P)^{-1}P(\rN^{\ft}\sfg[\omega_U, \omega_P, \omega_V])_P^+P$, $D^- = \delta^{-1}\asym{\ft}(V^{\ft}\omega_V - U^{\ft}\omega_U)$ and then:
\begin{equation}\label{eq:prj_fr}
  \begin{gathered}
    \Pi_{\sfg}(UPV^{\ft})[\omega_U, \omega_P, \omega_V] =\\
       [U\{-\gamma_1D^-+\frac{1}{\alpha_1+\gamma_1}(P^{-1}D^+ -D^+P^{-1})\} + \omega_U-UU^{\ft}\omega_U, \beta^{-1} D^+,\\
   V\{\alpha_1D^-+ \frac{1}{\alpha_1+\gamma_1}(P^{-1}D^+ -D^+P^{-1}) \} +\omega_V-VV^{\ft}\omega_V]\\
\end{gathered}
\end{equation}
The expression for $\rK$ is straightforward. Derive \cref{eq:prj_fr} by the usual matrix calculus rules, it is clear:
$$\rD_{\xi}D^- = \delta^{-1}\asym{\ft}(\xi_V^{\ft}\omega_V - \xi_U^{\ft}\omega_U)$$
For $\rD_{\xi}D^+$, we differentiate the defining equation for $D^+$:
$$\begin{gathered}
\mathcal{L}(P)\rD_{\xi}D^+ + \delta^{-1}(
\xi_P  D^+ P^{-1} + P^{-1} D^+ \xi_P -
                      P  D^+  P^{-1} \xi_P  P^{-1} -
                      P^{-1}  \xi_P  P^{-1}  D^+  P)  = \\
                      \rD_{\xi}\sym{\ft}(\alpha_1\delta^{-1}(U^{\ft}\omega_UP -PU^{\ft}\omega_U) + \gamma_1\delta^{-1}(V^{\ft}\omega_VP - PV^{\ft}\omega_V))
\end{gathered}$$
we can then solve for $\rD_{\xi}D^+$.
\end{proof}

\section{Geodesics and implementation} Note that $\cM$ is a complete manifold (as its components are complete, moreover their geodesics are known), a Frechet mean problem with the canonical metric on the Stiefel components ($\alpha_1 = \gamma_1 = \frac{1}{2}, \alpha_0=\gamma_0 = 1$) may be more tractable than other metrics for this manifold. See \cite{Zimmermann} for the log problem for Stiefel manifolds with canonical metric. The geodesics $F(t)$, represented by $U, P, V$, with $F(0) = (U, P, V)$ and $\dot{F}(0) = (\eta_U, \eta_P, \eta_V)$ is $\lb U(t), P^{1/2}\exp(t P^{-1/2}\eta_P P^{-1/2})P^{1/2}, V(t)\rb$, with $U(t)$ and $V(t)$ are Stiefel geodesics as in Proposition 5.1 of \cite{NguyenRiemann}.

We implemented the real and complex fixed-rank manifolds in \cite{Nguyen2020riemann}, with manifold classes RealFixedRank and ComplexFixedRank. We provided symbolic derivation of some formulas, together with a quadratic optimization problem as a numerical example.
  
\end{document}